\newtheorem{theorem}{Theorem}[section]
\newtheorem{lemma}[theorem]{Lemma}
\newtheorem{proposition}[theorem]{Proposition}
\newtheorem{definition}[theorem]{Definition}
\newtheorem{corollary}[theorem]{Corollary}
\theoremstyle{definition}
\newtheorem{example}[theorem]{Example}
\def\R{\mathbb{R}}
\definecolor{mat}{RGB}{110,180,240}
\begin{document}

\title{Orderable groups and bundles}
\date{\today}

\author[Mathieu Anel and Adam Clay]{Mathieu Anel and Adam Clay}
\address{CIRGET, 
Universit\'e du Qu\'ebec \`a Montr\'eal, 
Case postale 8888, Succursale centre-ville, 
%201 avenue du Pr\'esident Kennedy, 
Montr\'eal QC, H3C 3P8} \email{aclay@cirget.ca \\ anelm@uqam.ca}
\urladdr{http://thales.math.uqam.ca/~aclay/ \\ http://thales.math.uqam.ca/~anelm/}

\maketitle

\begin{abstract}
We define what is meant by a strict total order in a category having subobjects, products and fibre products.  This allows us to define the notions of an ordered bundle $X$ and an ordered $G$-set;  when $G=\pi_1(X)$ we relate these structures to orderings of $\pi_1(X)$.   We apply this to prove a theorem of Farrell \cite{Fa} relating right-orderings of $\pi_1(X)$ to embeddings of the universal cover $\widetilde{X} \hookrightarrow \mathbb{R} \times X$, and generalize it by relating bi-orderings of $\pi_1(X)$ to embeddings of the path space $P(X)  \hookrightarrow \mathbb{R} \times X \times X$. 
\end{abstract} 

\section{Introduction}
A group $G$ is {\em right-orderable} if there exists a strict total ordering $<$ of the elements of $G$ such that $g<h$ implies $gf<hf$ for all $f, g, h \in G$. A group $G$ is {\em bi-orderable} if there exists a right-ordering of $G$ that is also invariant under left multiplication, so $g<h$ implies $fg<fh$ for all $f,g,h \in G$.  

There is a well-documented connection between orderability and topology.  A number of recent articles have focused on orderability of the fundamental group of a $3$-manifold and its topological implications \cite{BRW05, BGS, CWatson}.  Another strong direction of research is the investigation of the space of orderings of a group, and the algebraic consequences of its topological structure \cite{Nav, Rivas, NR, Rivas2}.  

In the body of work connecting topology and orderability, a notable outlier is the remarkable work of Farrell \cite{Fa}, which re-characterizes right-orderability of fundamental groups in terms of covering spaces.  While his theorem is very appealing in its statement (see Corollary \ref{cor:farrell}), to the best of our knowledge it has yet to be practically applied in any other topological works. 

This note represents an effort to place Farrell's theorem in a natural topological context.  As a result, our new approach provides a different proof of a more general statement, and allows for the analysis of both right-orderable and bi-orderable groups.

%One could also define a left-orderable group in the obvious way, but a group is left-orderable if and only if it is right-orderable.  To see this, suppose that we have a right-ordering %$<_r$ of a group $G$.  Define a left-ordering $<_{\ell}$ of $G$ according to the rule
%\[ g <_{\ell} h \iff h^{-1} <_r g^{-1},
%\]
%then left invariance of the ordering $<_{\ell}$ is a consequence of the right invariance of $<_r$.  Therefore the matter of left-orderability versus right-orderability is a matter of %convention.

\section{Some categories, notation and background}
\label{notation}

Though left-orderability has become popular amonst topologists, our choice of right-orderability is a consequence of several common topological conventions.  In particular, the definition of concatention of paths $f$ and $g$ as
\begin{equation*}
(f*g)(s)= 
\begin{cases} f(2s) & \text{if $s \in [0, \frac{1}{2}]$,}
\\
g(2s-1) &\text{if $x \in [\frac{1}{2},1]$.}
\end{cases}
\end{equation*}
naturally produces a right action on the fibres of the standard constuction of the universal covering space.

That construction is as follows.  For the remainder of this paper fix a path connected, locally path connected and semilocally simply connected space $B$ with basepoint $b_0$, and set $G = \pi_1(B,b_0)$.  Recall that the universal covering space of $B$ is constructed as

\[ \widetilde{B} = \{ \alpha : [0,1] \rightarrow B | \alpha(0) = b_0\}/ \sim
\]
where the equivalence $\sim$ is homotopy fixing the endpoints.  Thus, an element of $\widetilde{B}$ is an equivalence class $[\alpha]$ of a path $\alpha : [0,1] \rightarrow B $ satisfying $\alpha(0) = b_0$.  The required covering map $p: \widetilde{B} \rightarrow B$ has formula $p([\alpha]) = \alpha(1)$, in $\widetilde{B}$ the basepoint is the equivalence class of the constant path $\alpha(t) = b_0$ for all $t \in [0,1]$.

% Let $\alpha$ be any path in $B$ beginning at $b_0$, and let $U$ be any path-connected neighbourhood of $\alpha(1)$.  A basis for the topology of  $\widetilde{B}$ consists of all %sets of the form 
%\[ B(\alpha, U) = \{ [\alpha * \delta ] | \delta \mbox{ is a path in $U$ beginning at $\alpha(1)$}\},
%\]
%where $*$ is concatenation of paths.

The space $\widetilde{B}$ admits a left action of the group $G$ as follows.  Given $g \in G$ represented by a loop $\gamma : [0,1] \rightarrow B$ and $\alpha \in \widetilde{B}$, define a left action (by deck transformations) by $g \cdot [\alpha] = [\gamma*\alpha]$.  On the other hand, the fibre $F = p^{-1}(b_0)$ admits a right action defined by $[\alpha] \cdot g = [\alpha * \gamma]$, in fact the fibre (over $b_0$) of any covering space $E \rightarrow B$ admits a right action defined in a similar way using lifts of paths.

%We will also be interested in the path space, 
%\[ P(B) = \{ \alpha:[0,1] \rightarrow B \} / \sim
%\]
%where $\sim$ is homotopy fixing the endpoints.   There is a map $P(B) \rightarrow B \times B$ making $P(B)$ into an object of $\mathbf{Cov}(B \times B)$, the map $p$ is given %by $p([\alpha]) = (\alpha(0), \alpha(1))$.

Let $\mathbf{Cov}(B)$ denote the category whose objects are covering spaces $p: E \rightarrow B$ of $B$. Given two objects $p_1:E_1 \rightarrow B$ and $p_2:E_2 \rightarrow B$ of $\mathbf{Cov}(B)$, a morphism between them is a continuous map $f : E_1 \rightarrow E_2$ such that the following diagram commutes:

\[
\xymatrix{ 
E_1 
\ar[rr]^{f} 
\ar[dr]_{p_1} 
&& E_2
\ar[dl]^{p_2 } \\ 
& B }
\]

\medskip

A {\em right $G$-set} is a set $X$ equipped with a right action of a group $G$.   Given $x \in X$ we will write the action of $g \in G$ on $x$ as $x \cdot g$. Together with $G$-equivariant maps of sets, $G$-sets form a category $G$-$\mathbf{Set}$. 

\bigskip

Define the so-called fibre functor $F:\mathbf{Cov}(B) \rightarrow G$-$\mathbf{Set}$ by $F(p:E \rightarrow B) = p^{-1}(b_0)$. Then $F$ is the functor that chooses the fibre above $b_0$ for every covering space $p:E \rightarrow B$. The action of $[\gamma]\in G$ on $e\in p^{-1}(b_0)$ is defined as the end of the unique path lifting $\gamma$ in $E$ starting from $e$. On maps $f$ between covering spaces, the functor $F$ acts by restricting $f$ to the fibre above $b_0$. This map is always equivariant for the action of $G$.

On the other side, let $U: G$-$\mathbf{Set} \rightarrow \mathbf{Cov}(B)$ be the functor that associates to a $G$-set $X$ the covering space
\[ U(X) = p:X\times_G\widetilde B \longrightarrow B 
\]
where $X\times_G\widetilde B = (X\times \widetilde{B})/ \sim$ for $([\alpha], x) \sim (g^{-1} \cdot [\alpha], x \cdot g)$ for all $g \in G$ and the map $p$ has formula $p([\alpha], x) = \alpha(1)$.   Here, the set $X$ is topologized with the discrete topology, and the topology on $U(X)$ is a quotient of the product topology.  For maps, define the action of $U$ as follows: $U( f: X \rightarrow X') = h$, where 
\[ h: (X\times \widetilde{B}) / \sim \hspace{1em} \longrightarrow (X'\times \widetilde{B}) / \sim
\]
has formula $h([\alpha], x) = ([\alpha], f(x))$.

\begin{proposition}\label{equivdiscrete}
The functors $F:\mathbf{Cov}(B) \rightarrow G$-$\mathbf{Set}$ 
and $U:G$-$\mathbf{Set}\to \mathbf{Cov}(B)$ 
are inverse equivalences of categories.  
\end{proposition}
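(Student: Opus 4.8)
The plan is to produce the two natural isomorphisms $\eta:\mathrm{id}_{G\text{-}\mathbf{Set}}\Rightarrow F\circ U$ and $\varepsilon:U\circ F\Rightarrow\mathrm{id}_{\mathbf{Cov}(B)}$, since exhibiting such a pair is precisely what is required for $F$ and $U$ to be inverse equivalences. I treat the two directions separately, the algebra of the conventions fixed in Section~\ref{notation} dictating every formula.

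For $\eta$, fix a $G$-set $X$. The fibre of $U(X)=X\times_G\widetilde B$ over $b_0$ consists of classes $[([\alpha],x)]$ with $\alpha(1)=b_0$, i.e. $\alpha$ a loop representing some element $[\alpha]\in G$. Taking $g=[\alpha]$ in the defining relation $([\alpha],x)\sim(g^{-1}\cdot[\alpha],x\cdot g)$ rewrites every such class with the constant path $c$ as $([\alpha],x)\sim([c],x\cdot[\alpha])$, so I set $\eta_X(x)=[([c],x)]$ and check that this is a bijection $X\to F(U(X))$ (the constant-path representative is unique because $g^{-1}\cdot[c]=[c]$ forces $g=e$). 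I then verify $G$-equivariance by a monodromy computation: the lift of a based loop $\gamma$ starting at $[([c],x)]$ is $t\mapsto[([\gamma_t],x)]$, where $\gamma_t=\gamma|_{[0,t]}$, with endpoint $[([\gamma],x)]=[([c],x\cdot[\gamma])]$, so the monodromy action agrees with $x\mapsto x\cdot[\gamma]$. Naturality of $\eta$ in $X$ is immediate from the formula $U(f)([\alpha],x)=([\alpha],f(x))$.

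For $\varepsilon$, given a covering $p:E\to B$ write $F(E)=p^{-1}(b_0)$ and define $\varepsilon_E:U(F(E))\to E$ by $\varepsilon_E([([\alpha],e)])=\widetilde\alpha(1)$, where $\widetilde\alpha$ is the unique lift of $\alpha$ to $E$ starting at $e$. Well-definedness needs two checks: homotopy invariance (homotopic-rel-endpoints representatives of $[\alpha]$ lift to paths with the same endpoint, by the homotopy lifting property), and compatibility with the balanced relation, namely $\varepsilon_E([\gamma*\alpha],e)=\varepsilon_E([\alpha],e\cdot[\gamma])$, which holds because lifting the concatenation $\gamma*\alpha$ at $e$ first traverses the lift of $\gamma$, arriving at the monodromy image $e\cdot[\gamma]$, and then lifts $\alpha$ from there. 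Since $p\circ\varepsilon_E$ equals the projection of $U(F(E))$, the map $\varepsilon_E$ is a morphism of $\mathbf{Cov}(B)$, and naturality in $E$ follows because any morphism $f:E\to E'$ commutes with path lifting, so $f\circ\widetilde\alpha$ is the lift of $\alpha$ starting at $f(e)$.

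It remains to see that each $\varepsilon_E$ is an isomorphism. Over $b_0$ it sends $[([c],e)]\mapsto e$, so on the fibre over $b_0$ it is, via $\eta_{F(E)}$, the identity of $F(E)$ and in particular a bijection; for a general $b$ one transports along a path $\beta$ from $b_0$ to $b$, lifting $\beta$ and its reverse, to check surjectivity and injectivity fibrewise. Because $\varepsilon_E$ is a morphism of covering spaces over the connected base $B$, it is a local homeomorphism with locally constant fibre cardinality, so bijectivity on one fibre gives a bijective local homeomorphism, hence a homeomorphism over $B$. I expect the genuinely delicate point to be exactly this topological bookkeeping for $\varepsilon_E$ — verifying continuity and the local-homeomorphism property against the quotient topology on $X\times_G\widetilde B$ and the local triviality of $E$ — rather than the algebra, which is forced by the concatenation and action conventions already in place.
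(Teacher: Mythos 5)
Your proposal is correct, but it is worth noting that the paper does not actually prove this proposition: its ``proof'' is a one-line deferral to May's book (Chapter 3, Sections 6 and 8 of the cited reference), where essentially the construction you give is carried out. So you have done genuinely more work than the paper, and your work is the standard and right approach: exhibit the unit $\eta_X(x)=[([c],x)]$ and counit $\varepsilon_E([([\alpha],e)])=\widetilde\alpha(1)$ explicitly and check they are natural isomorphisms. Your algebraic verifications are all correct with the paper's conventions --- in particular you correctly use the balanced relation with $g=[\alpha]$ to normalize fibre elements to constant-path representatives, correctly observe $g^{-1}\cdot[c]=[c]$ forces $g=e$ for injectivity, and your monodromy computation $[([c],x)]\cdot[\gamma]=[([\gamma],x)]=[([c],x\cdot[\gamma])]$ matches the paper's definition of the fibre action. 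The one point you flag but do not close, continuity of $\varepsilon_E$ against the quotient topology, can be finished in two lines: a map out of $(F(E)\times\widetilde B)/\!\sim$ is continuous if and only if its composite with the quotient map is, and for fixed $e$ the map $\widetilde B\to E$, $[\alpha]\mapsto\widetilde\alpha(1)$, is precisely the unique morphism of coverings $\widetilde B\to E$ sending the basepoint to $e$, which exists and is continuous by the lifting criterion since $\widetilde B$ is simply connected (here the standing hypotheses on $B$ --- path connected, locally path connected, semilocally simply connected --- are used). With that remark inserted, and your observation that a fibrewise-bijective morphism of coverings over the connected base is a homeomorphism, your argument is complete and self-contained, which is arguably preferable to the paper's citation since the statement is the structural backbone of everything that follows.
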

\begin{proof}

One can check that $F \circ U \cong \mathrm{Id}_{\mbox{$G$-set}}$ and $U \circ F \cong \mathrm{Id}_{\mathbf{Cov}(B)}$ \cite[Chapter 3, Sections 6 and 8]{PMay}.

%\notemat{give some details or a reference and see proof of proof \ref{equivcont}}

\end{proof}

\begin{example}\label{triv}
Let $*$ be the trivial action of $G$ of a one element set, the corresponding covering space is the trivial cover $B \to B$. The object $*$ is the terminal object of the category $G$-$\mathbf{Set}$, and $B\to B$ is terminal in the category of coverings.
\end{example}

\begin{example}\label{exGright}
Let $G_r$ be the right $G$-set defined by $G$ acting on itself by multiplication on the right.
By construction, the fibre of $\widetilde B\to B$ at $b_0$ is $G$ and the right $G$-set corresponding to $\widetilde B$ is $G_r$.
\end{example}

\begin{example}\label{coset}
Let $H$ be a subgroup of $G$, the right action of $G$ on itself defines a right action of $G$ on the set $H \backslash G$ of right $H$-cosets. The corresponding covering space is the quotient $H\backslash \widetilde B$ of $\widetilde B$ by the left action of $H$.
\end{example}

\begin{example}
Let $G_{conj}$ be the right action of $G$ on itself by conjugation: $h \cdot g = g^{-1}hg$ and
let $\widetilde B_{conj}$ be the corresponding covering space.   Its fibre at $b \in B$ is naturally isomorphic to $\pi_1(B,b)$,  so each fibre is equipped with a group structure. In particular, $\widetilde B_{conj}$ has at least one global section given by the unit element in each fibre, so it is not connected.
\end{example}

\begin{example}\label{exGbi}
The product $G\times G$ is naturally the fundamental group of $B\times B$ at $(b_0,b_0)$, so there exists an equivalence between 
covering spaces of $B\times B$ and right $G\times G$-sets.

The path space $P(B)$ of $B$ is constructed in a similar way to the universal covering spaces.  Set
\[ P(B) = \{ \alpha:[0,1] \rightarrow B \} / \sim
\]
where $\sim$ is homotopy fixing the endpoints of the paths.
There's a covering map $p:P(B) \rightarrow B \times B$ given by $p(\alpha) = (\alpha(0), \alpha(1))$.
Observe that the fibre $p^{-1}(b_0, b_0)$ is precisely all those paths which begin and end at the point $b_0$, considered up to homotopy.
Thus there is a canonical identification of the group $G$ with the fibre of $p:P(B) \rightarrow B \times B$ at $(b_0,b_0)$.

For a path $\alpha$ with $p([\alpha]) = (b_0, b_0)$ the natural action of $G \times G$ on $f=[\alpha]$ is given by
\[ f \cdot (g, h) = [\gamma^{-1} * \alpha * \beta] = g^{-1}fh,
\]
where $\gamma, \beta$ are paths representing the fundamental group elements $g, h$.

The corresponding right $G\times G$-set is $_{\ell}G_r$, defined as $G$ with an action by multiplication on the left and on the right: $f \cdot (g,h) = g^{-1}fh$. 

Remark that the pull back of $P(B)$ along $B\to B\times B:b\mapsto (b_0,b)$ is exactly $\widetilde B$.
\end{example}

\bigskip

It will be useful to generalize the equivalence of $\mathbf{Cov}(B)$ and $G$-$\mathbf{Set}$ by considering not only actions of $G$ on sets but also on any topological space. The category $G$-$\mathbf{Space}$ of $G$-spaces is defined as that of topological spaces equipped with a right action of $G$ (continous for the discrete topology on $G$) and $G$-equivariant maps. There is an obvious fully faithful functor 
$$
G\textrm{-}\mathbf{Set}\subset G\textrm{-}\mathbf{Space}.
$$

For $E\to B$ a topological space over $B$, we shall denote by $\widetilde E$ the pull-back of $E$ over $\widetilde B$.
A locally trivial bundle (we shall say simply a bundle) $p:E \rightarrow B$ is {\em locally constant} if $\widetilde E\to \widetilde B$ is a trivial bundle. (Such bundles can be characterized as bundles for which there exists a choice of trivializing charts $\{ (U_i, \phi_i) \}_{i \in I}$ such that the transition functions $t_{ij}: U_i \cap U_j \rightarrow G$ are constant, hence their name.) A map $f:E_1\to E_2$ between two locally constant bundles over $B$ is said to be {\em locally constant} if the induced map $\widetilde f:\widetilde E_1\simeq F_1\times \widetilde B\to \widetilde E_2\simeq F_2\times \widetilde B$ over $\widetilde B$ is constant, i.e. $\widetilde f(x,\tilde b) = (g(x),\tilde b)$ for some continous map $g:F_1\to F_2$.
Let $\mathbf{LCB}(B)$ be the category of locally constant bundles over $B$ and locally constant maps.

If $\mathbf{Top}/B$ is the category of spaces over $B$ with maps over $B$, there is an obvious inclusion functor $\mathbf{LCB}(B)\to \mathbf{Top}/B$ which is faithful but not full (not every map between locally constant spaces over $B$ is locally constant).

Covering spaces over $B$ are a special case of locally constant $G$-bundles over $B$. It can be proved that every map of covering spaces is locally constant. The natural inclusion functor is thus full and faithful
\[
\mathbf{Cov}(B) \subset \mathbf{LCB}(B).
\]
Trivial bundles are also examples of locally constant bundles, but not every map between trivial bundles is locally constant.

\begin{proposition}\label{equivcont}
The functors $F:\mathbf{LCB}(B)\to G\textrm{-}\mathbf{Space}$ sending $p:E\to B$ to $p^{-1}(b_0)$ 
and $U:G\textrm{-}\mathbf{Space}\to \mathbf{LCB}(B)$ sending $X$ to $X\times_G \widetilde B\to B$
are inverse equivalences of categories.
\end{proposition}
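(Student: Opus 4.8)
The plan is to reduce the statement to Proposition \ref{equivdiscrete} by exploiting the fact that the universal cover $\widetilde B\to B$ is a principal $G$-bundle and that, by definition, a locally constant bundle becomes trivial after pulling back along it. Throughout I write $\pi:\widetilde B\to B$ and, for $E\to B$, I use $\widetilde E=E\times_B\widetilde B$ for the pullback, on which $G$ acts through its (left) deck action on $\widetilde B$. The key structural fact I would establish first is descent: because $\pi$ is a free $G$-covering, $E$ is recovered from $\widetilde E$ as the quotient $E\cong\widetilde E/G$, and a map over $B$ is the same datum as a $G$-equivariant map of pullbacks over $\widetilde B$.

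First I would check that the two functors are well defined. For $U$, I compute the pullback of $U(X)=X\times_G\widetilde B$ along $\pi$. Using $\widetilde B\times_B\widetilde B\cong \widetilde B\times G$ (the defining property of a $G$-covering), one gets $(X\times_G\widetilde B)\times_B\widetilde B\cong X\times_G(\widetilde B\times G)\cong X\times\widetilde B$, a trivial bundle over $\widetilde B$ with fibre $X$; hence $U(X)$ is locally constant, and the transition functions are visibly the constant monodromy maps. For $F$, the triviality $\widetilde E\cong F\times\widetilde B$ makes the $G$-action on $\widetilde E$ restrict to a continuous right action on the fibre $F=p^{-1}(b_0)$, so $F(E)$ is a genuine $G$-space. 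On morphisms, a locally constant map $f:E_1\to E_2$ is by definition one whose lift $\widetilde f$ has the form $(x,\tilde b)\mapsto(g(x),\tilde b)$; restricting to the fibre returns the equivariant map $g$, and conversely an equivariant $f:X\to X'$ induces $([\alpha],x)\mapsto([\alpha],f(x))$, which is locally constant by the computation above. These assignments are manifestly functorial.

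Next I would produce the two natural isomorphisms. For $F\circ U\cong \mathrm{Id}$, the fibre of $U(X)$ over $b_0$ is $(X\times G)/\sim$, and the map $([\alpha],x)\mapsto x\cdot[\alpha]$, which uses $[\alpha]\in G$ to normalise the $\widetilde B$-coordinate to the constant path, is a $G$-equivariant homeomorphism onto $X$; naturality in $X$ is immediate. For $U\circ F\cong\mathrm{Id}$, I invoke descent: starting from $E$, form $F=p^{-1}(b_0)$ with its monodromy, use the $G$-equivariant trivialisation $\widetilde E\cong F\times\widetilde B$, and pass to quotients to obtain $E\cong\widetilde E/G\cong(F\times\widetilde B)/G=F\times_G\widetilde B=U(F(E))$ as bundles over $B$. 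Both isomorphisms refine the set-level isomorphisms of Proposition \ref{equivdiscrete}, which guarantees their naturality and their compatibility with the inclusion $G\textrm{-}\mathbf{Set}\subset G\textrm{-}\mathbf{Space}$; the only new content is that each underlying bijection is now a homeomorphism.

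I expect the main obstacle to be precisely this topological bookkeeping: verifying that the quotient topology on $X\times_G\widetilde B$ coincides with the bundle topology making $U(X)$ locally trivial, that the monodromy action on $F$ is continuous, and that the canonical bijections of Proposition \ref{equivdiscrete} upgrade to homeomorphisms. All of these follow from the local triviality of $\pi$ together with the triviality of $\widetilde E\to\widetilde B$, since over an evenly covered open $U_i\subset B$ the identifications are between products $X\times U_i$; once checked locally, they globalise by gluing along the constant transition functions.
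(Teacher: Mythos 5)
Your proposal is correct and follows essentially the same route the paper intends: the paper's proof is just ``similar to Proposition \ref{equivdiscrete}'' (itself deferred to \cite{PMay}), i.e.\ the standard monodromy equivalence upgraded from discrete fibres, and your reduction via descent along $\widetilde B\to B$ --- the pullback computation $(X\times_G\widetilde B)\times_B\widetilde B\cong X\times\widetilde B$, the recovery $E\cong\widetilde E/G$, and the unit/counit isomorphisms refining those of the discrete case --- is exactly that argument with the details filled in. The only point you state rather than prove is that the deck action on $\widetilde E\cong F\times\widetilde B$ has product form $(x,\tilde b)\mapsto(\rho_g(x),g\cdot\tilde b)$ with $\rho_g$ independent of $\tilde b$ (it follows from the constancy of the transition functions and connectedness of $\widetilde B$), which falls under the topological bookkeeping you correctly identify.
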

\begin{proof}
Similar to proposition \ref{equivdiscrete}.
\end{proof}

Summarizing, we have the following diagram of categories
\[
\xymatrix{ 
\mathbf{Cov}(B) 
\ar[r]^-\simeq
\ar@{^{(}->}[d]
& \mbox{$G$-\textbf{Set}}
\ar@{^{(}->}[d]\\ 
\textbf{LCB}(B) 
\ar[r]^-\simeq
& \mbox{$G$-\textbf{Space}}
\\
}
\]
where the horizontal arrows are equivalences of categories and the vertical ones fully faithful embeddings.

\bigskip

We now recall some notions that will be needed in the next section.
If $p:E\to B$ is a bundle, a {\em sub-bundle} of $E$ is defined as a subspace $F\subset E$ such that the map $F\to B$ is still a locally trivial map. If $E\to B$ is locally constant, not every sub-bundle is locally constant.

For $i=1,2,3$, let $p_i:E_i \rightarrow B$ be three bundles over $B$ and let $f_1:E_1\to E_3$ and $f_2:E_2\to E_3$ be two bundle maps. The fibre product of $f_1$ and $f_2$, noted $E_1\times_{E_3}E_2$ can be defined as follows.
Consider $f_1 \times f_2:E_1 \times E_2 \to E_3 \times E_3$, with each product topologized with the product topology.  Then $f_1 \times f_2$ has the formula $(f_1 \times f_2)(e_1, e_2) = (f_1(e_1), f_2(e_2))$.
Define
\[ E_1 \times_{E_3} E_2 = \{ (e_1, e_2) \in E_1 \times E_2 | f_1(e_1) = f_2(e_2) \},
\]
and topologize it using the subspace topology. There is an obvious map $E_1 \times_{E_3} E_2\to E_3$ and by composition with $p_3$ a map $p_4:E_1 \times_{E_3} E_2\to B$.

In the particular case where $E_3=B$, $E_1\times_BE_2$ is simply called the product of $E_1$ and $E_2$.

\begin{lemma}\label{pbLCB}
If the $E_i$ are bundles over $B$, then $E_1 \times_{E_3} E_2\to E_3$ is a bundle over $E_3$ and $E_1 \times_{E_3} E_2\to B$ is a bundle over $B$.
Moreover, if the $E_i$ are locally constant bundles so is $E_1 \times_{E_3} E_2\to B$.
\end{lemma}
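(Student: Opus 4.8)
The plan is to prove all three assertions by reduction to local triviality, using the elementary fact that a fibre product of trivial bundles is again trivial. I would first establish that $E_1\times_{E_3}E_2\to E_3$ is a bundle over $E_3$. The projection onto $E_3$ sends $(e_1,e_2)$ to the common value $f_1(e_1)=f_2(e_2)$, so its fibre over a point $e_3$ is $f_1^{-1}(e_3)\times f_2^{-1}(e_3)$. The natural approach is to localize: around a point $e_3\in E_3$ choose an open set $W$ over which both bundle maps $f_1$ and $f_2$ trivialize, say $f_1^{-1}(W)\cong W\times A_1$ and $f_2^{-1}(W)\cong W\times A_2$ as spaces over $W$. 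Over such a $W$ the fibre product becomes $\{(w,a_1,w',a_2):w=w'\}\cong W\times(A_1\times A_2)$, and the projection to $E_3$ is the obvious one; this exhibits $E_1\times_{E_3}E_2\to E_3$ as locally trivial with fibre $A_1\times A_2$. Equivalently, one may observe that $E_1\times_{E_3}E_2\to E_2$ is the pullback of $f_1$ along $f_2$ and conclude by stability of bundles under pullback.

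Next, for the assertion that $E_1\times_{E_3}E_2\to B$ is a bundle, I would factor this map as the composite $E_1\times_{E_3}E_2\to E_3\xrightarrow{\,p_3\,}B$. The first map is a bundle by the previous step and $p_3$ is a bundle by hypothesis, so it suffices to invoke the standard fact that a composite of locally trivial bundles is again locally trivial: over a chart $U\subseteq B$ trivializing $p_3$ and a subchart of $E_3$ trivializing the first map, one assembles local product charts for $E_1\times_{E_3}E_2\to B$, with fibre the product of the fibre of $p_3$ with $A_1\times A_2$.

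Finally, for the locally constant case, the plan is to pass to the universal cover. Pulling back along $\widetilde B\to B$ commutes with fibre products, so $\widetilde{E_1\times_{E_3}E_2}\cong \widetilde E_1\times_{\widetilde E_3}\widetilde E_2$. By local constancy each $\widetilde E_i\cong F_i\times\widetilde B$ is trivial over $\widetilde B$, and in these trivializations the pulled-back maps take the product form $\widetilde f_i(x,\tilde b)=(g_i(x),\tilde b)$, constant in the $\widetilde B$-direction. Identifying the fibre product then gives $\widetilde E_1\times_{\widetilde E_3}\widetilde E_2\cong(F_1\times_{F_3}F_2)\times\widetilde B$, trivial over $\widetilde B$, which is exactly local constancy of $E_1\times_{E_3}E_2\to B$.

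I expect the main obstacle to be the fibrewise control of the bundle maps $f_1,f_2$, which underlies both the first and the last step. The localization in the first step requires that $f_1$ and $f_2$ be locally trivial over $E_3$ (for covering spaces this is automatic, since every map of covers is itself a covering map), and the locally constant statement requires the identification of each $\widetilde f_i$ with a map that is genuinely constant in the $\widetilde B$-direction. Once this fibrewise control is secured, the remaining steps are the formal facts that pullbacks and composites of bundles are bundles and that a fibre product of trivial bundles is trivial.
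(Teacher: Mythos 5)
Your proof is correct and follows essentially the same route as the paper's: stability of locally trivial maps under pullback and composition for the two bundle claims, and the identification $\widetilde{E_1\times_{E_3}E_2}\simeq \widetilde E_1\times_{\widetilde E_3}\widetilde E_2$ over $\widetilde B$ for the locally constant claim. You are in fact more explicit than the paper at the two points where care is genuinely needed --- that local triviality of $E_1\times_{E_3}E_2\to E_3$ rests on $f_1,f_2$ themselves being locally trivial over $E_3$ (automatic for coverings, and implicit in the paper's appeal to pullback stability), and that triviality of $\widetilde E_1\times_{\widetilde E_3}\widetilde E_2$ uses the product form $\widetilde f_i(x,\tilde b)=(g_i(x),\tilde b)$ of locally constant maps, not merely triviality of the bundles $\widetilde E_i$.
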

\begin{proof}
Let us assume that the $E_i$ are bundles, i.e. locally trivial maps, but such maps are stable by pull-backs and composition.
If now the $E_i$ are locally constant bundles, we need to prove that $E_1 \times_{E_3} E_2\to B$ is trivial when pulled-back on $\widetilde B$. For this we use that $\widetilde {E_1 \times_{E_3} E_2} \simeq \widetilde E_1 \times_{\widetilde E_3} \widetilde E_2$ and as all of $\widetilde E_i$ are trivial bundles, so is their fibre product.
\end{proof}

The {\em image} of a morphism $f:E_1\to E_2$ of bundles over $B$ is defined as the set $im(f)=\{y\in E_2|\exists x\in E_1,y=f(x)\}$ topologized with the subspace topology. In general, the image of a map of bundles is not a bundle, but in the case of locally constant maps between locally constant bundles the image is a locally constant bundle: if $\widetilde f:\widetilde E_1\to \widetilde E_2:(x,\tilde b)\mapsto (g(x),\tilde b)$ is the map induced by $f$ over $\widetilde B$, then $im(\widetilde f) = \widetilde{im(f)} = im(g)\times \widetilde B$.

The diagonal $\Delta_E$ of a bundle $p:E\to B$ is defined as the image of the map $E\to E\times_BE:x\mapsto(x,x)$, it is isomorphic to $E\to B$.

All these notions restrict to covering spaces : any locally trivial sub-bundle of a covering space is a covering space and the fibre product of covering spaces is still a covering space, etc.

\medskip

Let $X$ be a $G$-space, a subspace $Y\subset X$ is call a sub-$G$-space if it is stable by the action of $G$.

For $i=1,2,3$, let $X_i$ be three $G$-spaces, and let $f_1:X_1\to X_3$ and $f_2:X_2\to X_3$ be two bundle maps. The fibre product of $f_1$ and $f_2$ is  denoted $X_1\times_{X_3}X_2$ and defined as follows. The underlying set is the fibre product of the underlying sets, i.e. the set of pairs $(x_1,x_2)$ such that $f_1(x_1)=f_2(x_2)$ and the action is defined by $(x_1,x_2) \cdot g$ = $(x_1 \cdot g,x_2 \cdot g)$. In the case where $E_3$ is the $G$-set $*$ (see Example \ref{triv}), the fibre product is simply called the product and is denoted $X_1\times X_2$.

The {\em image} of a morphism $f:X_1\to X_2$ of $G$-spaces is defined as the set $im(f)=\{y\in X_2|\exists x\in X_1,y=f(x)\}$ topologized with the subspace topology. It is clear that the action of $G$ on $X_2$ restricts to $im(f)$.

The diagonal $\Delta_X$ of a $G$-space $X$ is defined as the image of the map $X\to X\times X:x\mapsto(x,x)$ with the obvious action. It is isomorphic to $X$.

All these notions restricts to $G$-sets : any sub-$G$-space of a $G$-set is a $G$-set and the fibre product of $G$-sets is still a $G$-set, etc.

\medskip

The notions of subobjects, products, fibre products, image and diagonal can be defined in purely categorical terms and are thus preserved under equivalences of categories \cite{MLane}. As a consequence, all previous notions on locally constant bundles over $B$ correspond to the same notion of $G$-spaces. For example, there is a bijection between locally trivial sub-bundles of a locally trivial bundle $E\to B$ and sub-$G$-spaces of the corresponding fibre $X$.

%\notemat{put a referenc to MacLane's Cat for working mathematicans}

\section{Ordered structures}

We start by recalling some facts about relations and total orders on a given set.
If $R\subset X\times X$ is a relation on a set $X$, the {\em opposite relation } $R^{op}$ is defined as
$R^{op} = \{(x,y)| yRx\}$, or equivalently as the inclusion $R\to X\times X\overset{\sigma}{\simeq} X\times X$ where the map $\sigma$ is the permutation of the two factors.

For $i=1,2$, if $R_i\subset X\times X$ are two relations on $X$, their {\em composition} $R_2\circ R_1$, is defined as $R_2\circ R_1 = \{(x,z)| \exists z\in X, xR_1y\ \textrm{and}\ yR_2z \}$.
Equivalently, if $s_i$ and $t_i$ are respectively the first and second projections of $R_i$ on $X$, the fibre product of $t_1:R_1\to X$ and $s_2:R_2\to X$ is the set $R_1\times_XR_2:\{(x,y,z)| xR_1y\ \textrm{and}\ yR_2z \}$.
The maps $s_1$ and $t_2$ define a map $R_1\times_XR_2\to X\times X$ and $R_2\circ R_1$ can be defined as the image of this map.

\medskip

Recall that a {\em strict total order} on a set $X$ is a relation $<$ such that:
\begin{enumerate}
\item (Transitivity) ($x<y$ and $y<z$) $\Rightarrow$ $x<z$,
\item (Irreflexivity) not $x< x$,
\item (Antisymmetry) not ($x<y$ and $y<x$),
\item (Totality) $x\not=y \Rightarrow$ ($x<y$ or $y<x$).
\end{enumerate}
If $(X_1,<_1)$ and $(X_2,<_2)$ are two strictly totally ordered sets, a map $f:X_1\to X_2$ is {\em order-preserving} if $x<_1y\Rightarrow f(x)<_2f(y)$.

\medskip

Let $R$ be the subobject of $X\times X$ corresponding to a strict total order $<$. Using the previous considerations, these axioms can be written as the following conditions on subobjects of $X\times X$:
\begin{enumerate}
\item (Transitivity) $R\circ R\subset R$,
\item (Irreflexivity) $\Delta_X \cap R = \emptyset$ (where $\Delta_X\subset X\times X$ is the diagonal),
\item (Antisymmetry) $R \cap R^{op} = \emptyset$,
\item (Totality) $R \cup R^{op} = X\times X \setminus \Delta_X$.
\end{enumerate}
If $(X_1,<_1)$ and $(X_2,<_2)$ are two strictly totally ordered sets, corresponding to subobjects $R_1\subset X_1\times X_1$ and $R_2\subset X_2\times X_2$, a map $f:X_1\to X_2$ is order-preserving if and only if $f\times f$ sends $R_1$ into $R_2$.

\begin{example}\label{exR}
Let $(\R,<)$ be the set of real numbers with its canonical strict total order, the corresponding subobject $R_<\subset \R^2$ is the set $R_<=\{(x,y)|x<y\}$.
\end{example}

\bigskip
An important remark for the sequel is that these definitions make sense not only for sets but in any category $\mathbf C$ where subobjects, products and fibre products exist. In particular these definitions make sense in the categories $\mathbf{Cov}(B)$, $\textbf{LCB}(B)$, $G$-$\textbf{Set}$ and $G$-$\textbf{Space}$.

Let $\mathbf C$ be one of the categories $\mathbf{Cov}(B)$, $\textbf{LCB}(B)$, $G$-$\textbf{Set}$ and $G$-$\textbf{Space}$.
A {\em relation} on an object $X\in \mathbf C$ is a subobject $R\subset X\times X$.
The {\em opposite} of a relation $R$ is the relation $R^{op}$ defined by $R\subset X\times X\overset{\sigma}{\simeq}X\times X$, where $\sigma$ is the permutation of the two factors.
For two relations $R_1\subset X\times X$ and $R_2\subset X\times X$, if $s_i$ and $t_i$ are respectively the first and second projections of $R_i$ on $X$, the fibre product of $t_1:R_1\to X$ and $s_2:R_2\to X$ is the set $R_1\times_XR_2:\{(x,y,z)| xR_1y\ \textrm{and}\ yR_2z \}$. The {\em composition} $R_2\circ R_1$ is defined as the image of the map $R_1\times_XR_2\to X\times X$ induced by $s_1$ and $t_2$.

\begin{definition}\label{deforder}
A relation $R\subset X\times X$ on an object of $\mathbf{C}$ is said to be a {\em strict total order} if it satisfies the following properties
\begin{enumerate}
\item (Transitivity) $R\circ R\subset R$,
\item (Irreflexivity) $\Delta_X \cap R = \emptyset$,
\item (Antisymmetry) $R \cap R^{op} = \emptyset$,
\item (Totality) $R \cup R^{op} = X\times X \setminus \Delta_X$.
\end{enumerate}
\end{definition}

In particular, for such an order $R$, we have $X\times X = R\sqcup R^{op}\sqcup \Delta_X$ where $\sqcup$ denotes the disjoint union.

\begin{definition} \label{ordered bundle}
A locally constant bundle $p: E \rightarrow B$ is {\em orderable} if there exists a locally constant sub-bundle $R \subset E \times_B E$ such that $R$ satisfies the axioms of a strict total order.

Suppose that $E_1$ and $E_2$ are ordered bundles with order relations $R_1 \subset E_1 \times_B E_1$ and $R_2 \subset E_2 \times_B E_2$. We call a map $f:E_1 \rightarrow E_2$ {\em order-preserving} if the map $f \times f:E_1 \times_B E_1 \to E_2 \times_B E_2$ satisfies $(f \times f)(R_1) \subset R_2$.
\end{definition}

\begin{lemma}
If $R \subset E \times_B E$ is a strict total order on some bundle $E\to B$, and if $B'\to B$ is any continous map, the relation $R'\subset E'\times_{B'}E'$ where $E'=E\times_BB'$ and $R'=R\times_{B\times B}B'\times B'$ is a strict total order on $E'\to B'$.
\end{lemma}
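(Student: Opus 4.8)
The plan is to prove that base change along the given map $q\colon B'\to B$ preserves all the categorical constructions out of which Definition \ref{deforder} is built, so that each axiom satisfied by $R$ transports directly to the corresponding axiom for $R'$. Writing $q^{*}(-)=(-)\times_BB'$ for the pullback functor from bundles over $B$ to bundles over $B'$, the starting point is the canonical isomorphism $E'\times_{B'}E'\cong (E\times_BE)\times_BB'=q^{*}(E\times_BE)$, under which $R'$ is identified with $q^{*}(R)$. This already shows that $R'$ is a genuine relation on $E'$: by Lemma \ref{pbLCB} the pullback of the sub-bundle $R\subset E\times_BE$ is a sub-bundle of $E'\times_{B'}E'$, so $R'\subset E'\times_{B'}E'$ is a subobject in the required sense.

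Next I would record the compatibilities of $q^{*}$ with the operations appearing in the axioms. Products and fibre products commute with base change by the standard pasting law for pullbacks (equivalently, $q^{*}$ is a right adjoint, to postcomposition with $q$); this gives both the isomorphism above and, applied to the fibre product defining composition, $q^{*}(R_1\times_ER_2)\cong q^{*}(R_1)\times_{E'}q^{*}(R_2)$. The opposite relation is obtained by permuting the two factors, a functorial operation that $q^{*}$ plainly commutes with, so $q^{*}(R^{op})=R'^{op}$. Granting in addition that $q^{*}$ commutes with images, intersections, the diagonal, and the partition $E\times_BE=R\sqcup R^{op}\sqcup\Delta_E$, the four axioms translate mechanically: for instance $R'\circ R'=q^{*}(R\circ R)\subset q^{*}(R)=R'$ for transitivity, $\Delta_{E'}\cap R'=q^{*}(\Delta_E\cap R)=q^{*}(\emptyset)=\emptyset$ for irreflexivity, the same identity with $R^{op}$ for antisymmetry, and $E'\times_{B'}E'=q^{*}(R\sqcup R^{op}\sqcup\Delta_E)=R'\sqcup R'^{op}\sqcup\Delta_{E'}$ for totality.

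The main obstacle is precisely the remaining compatibility: images and the disjoint decomposition are colimit-type constructions, so the adjointness argument does not apply to them directly. I would resolve this on underlying sets. All of the relevant operations are computed on underlying spaces with the subspace topology, and $q^{*}$ acts on underlying sets as the set-theoretic fibre product with the fixed fibres of $q$; since fibre product with a fixed object is a fibrewise, exact operation, it distributes over unions, commutes with images, and respects the partition into $R$, $R^{op}$, $\Delta_E$. A short check that the subspace topologies match then upgrades these set-level identities to the level of spaces over $B'$.

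Equivalently, and most transparently, one can bypass this bookkeeping entirely by noting that each axiom is a condition on fibres. For $b'\in B'$ the fibre of $E'$ over $b'$ is the fibre of $E$ over $q(b')$, and the fibre of $R'$ over $b'$ is exactly the fibre $R_{q(b')}$ of $R$. Since $R$ is a strict total order on $E\to B$ if and only if $R_b$ is a strict total order on the fibre $E_b$ for every $b\in B$ --- the four axioms being visibly fibrewise --- and since every fibre of $R'$ already occurs as a fibre of $R$, the relation $R'$ is a strict total order on $E'\to B'$. This fibrewise reduction is what I expect to make the colimit and complement issues evaporate.
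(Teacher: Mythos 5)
Your proposal is correct, but note that the paper offers no proof of this lemma at all --- it is stated bare, with only the following remark (that the case $B'=\mathrm{pt}$ recovers a strict total order on each fibre) hinting at the intended justification. So your argument fills a genuine gap rather than diverging from anything, and the route you take, especially the closing fibrewise reduction, is exactly the reasoning the paper implicitly relies on: all the operations in Definition \ref{deforder} (fibre product, opposite, image, intersection, the partition into $R\sqcup R^{op}\sqcup\Delta$) are computed on underlying sets fibre by fibre, the four axioms hold for $R$ iff they hold for every fibre $R_b$, and every fibre of $R'$ is a fibre of $R$; this cleanly sidesteps the image/colimit compatibility you correctly flag as the weak point of the pure adjointness argument. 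Two small remarks. First, the formula $R'=R\times_{B\times B}(B'\times B')$ in the statement, read literally, contains points $(e_1,e_2,b_1',b_2')$ with $b_1'\neq b_2'$ but $q(b_1')=q(b_2')$, and so is not contained in $E'\times_{B'}E'$ when $q$ is non-injective; your silent identification of $R'$ with $q^*(R)$ under $E'\times_{B'}E'\cong (E\times_BE)\times_BB'$ is the intended reading and worth making explicit. Second, your citation of Lemma \ref{pbLCB} for the sub-bundle claim is slightly off-label, since that lemma concerns fibre products of bundles over the same base $B$ while here you base-change along an arbitrary continuous map $B'\to B$ (which need not be a bundle); however, the ingredient used in its proof --- stability of locally trivial maps under pullback --- is precisely what you need, so the step is sound once attributed to that fact directly.
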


This lemma applies in particular in the case where $B'$ is a point, and says that a strict total order on $E\to B$ gives a strict total order in every fibre of $E\to B$. In the same way a bundle can be thought of as the continous family of its fibres, a strict total order on a bunde can be thought of as a continous family of strict total orders.

\medskip

We will not use the following but it gives a way to construct ordered bundles.

\begin{lemma}\label{restrictorder}\label{pullback bundle order}
If $f:E_1\to E_2$ is an injective locally constant map of locally constant bundles and if $R_2\subset E_2\times_BE_2$ is a strict total order on $E_2$, then $R_1=(f\times f)^{-1}(R_2)$ is a strict total order on $E_1$.
\end{lemma}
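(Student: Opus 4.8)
The plan is to pass through the equivalence $\mathbf{LCB}(B)\simeq G\textrm{-}\mathbf{Space}$ of Proposition~\ref{equivcont}, under which products, fibre products, subobjects, images and diagonals all go to the corresponding notions on $G$-spaces. There a subobject is just a $G$-stable subspace, so each of the four conditions of Definition~\ref{deforder} can be tested on underlying points, the only extra bookkeeping being that the relevant subset is $G$-stable and carries the subspace topology. Set $\phi=f\times f:E_1\times_BE_1\to E_2\times_BE_2$. Since $f$ is locally constant so is $\phi$, and since $R_2\hookrightarrow E_2\times_BE_2$ is a locally constant sub-bundle by hypothesis, the preimage $R_1=\phi^{-1}(R_2)$ is the fibre product $(E_1\times_BE_1)\times_{E_2\times_BE_2}R_2$ and hence, by Lemma~\ref{pbLCB}, again a locally constant sub-bundle; this shows $R_1$ is a legitimate relation on $E_1$. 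Before checking the axioms I would record two formal properties of $\phi$: it commutes with the factor-swap $\sigma$, so that $R_1^{op}=\phi^{-1}(R_2^{op})$, and it satisfies $\Delta_{E_1}\subset\phi^{-1}(\Delta_{E_2})$ in general, with the reverse inclusion $\phi^{-1}(\Delta_{E_2})\subset\Delta_{E_1}$ holding exactly because $f$ is injective.

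Irreflexivity and antisymmetry are then purely formal, using that preimages commute with intersections: $\Delta_{E_1}\cap R_1\subset\phi^{-1}(\Delta_{E_2})\cap\phi^{-1}(R_2)=\phi^{-1}(\Delta_{E_2}\cap R_2)=\emptyset$ and $R_1\cap R_1^{op}=\phi^{-1}(R_2)\cap\phi^{-1}(R_2^{op})=\phi^{-1}(R_2\cap R_2^{op})=\emptyset$. Transitivity I would settle on points: if $(x,z)\in R_1\circ R_1$ there is $y$ with $(x,y),(y,z)\in R_1$, hence $(f(x),f(y)),(f(y),f(z))\in R_2$, so $(f(x),f(z))\in R_2\circ R_2\subset R_2$ by transitivity of $R_2$, giving $(x,z)\in\phi^{-1}(R_2)=R_1$. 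None of these three arguments uses injectivity.

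The one axiom that genuinely requires the hypothesis, and the crux of the lemma, is totality. The inclusion $R_1\cup R_1^{op}\subset (E_1\times_BE_1)\setminus\Delta_{E_1}$ is immediate from irreflexivity. For the reverse inclusion I would take $(x,y)$ in a common fibre with $x\neq y$; injectivity of $f$ forces $f(x)\neq f(y)$, so $(f(x),f(y))\notin\Delta_{E_2}$, and totality of $R_2$ puts $(f(x),f(y))$ into $R_2$ or $R_2^{op}$, i.e. $(x,y)\in R_1$ or $(x,y)\in R_1^{op}$. This is precisely the place where injectivity is indispensable: it is what prevents two distinct points of a fibre of $E_1$ from being identified by $f$ and so pushed onto the diagonal of $E_2$, where $R_2$ says nothing. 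The main thing to be careful about is therefore not these point-level verifications but the categorical bookkeeping of the first paragraph, namely that $R_1$ really is a locally constant sub-bundle and that the composition $R_1\circ R_1$, defined through an image, agrees with the set-level image used above; both are handled by Lemma~\ref{pbLCB}, by the description of images of locally constant maps, and by the equivalence of Proposition~\ref{equivcont}.
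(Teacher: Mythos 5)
Your proposal is correct and follows essentially the same route as the paper's (much terser) proof: both obtain $R_1$ as a locally constant sub-bundle of $E_1\times_BE_1$ via the fibre-product stability of Lemma \ref{pbLCB}, then verify the four axioms of Definition \ref{deforder} directly, with injectivity of $f$ invoked exactly once, for totality. Your write-up simply makes explicit the pointwise checks and the identification $\phi^{-1}(\Delta_{E_2})=\Delta_{E_1}$ that the paper dismisses as ``easy to check.''
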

\begin{proof}
$R_1$ is a locally trivial sub-bundle of $E_1\times_BE_1$ by Lemma \ref{pbLCB}. Properties (2)-(5) of $R$ are easy to check. Note that injectivity of $f$ is required only to prove (5).
 \end{proof}

\begin{example}
Recall from Example \ref{exR} $R_<=\{(x,y)|x<y\}\subset \R^2$ defining the canonical order on $\R$.
Let $\R_B=\R\times B\to B$ be the trivial line bundle, then $\R_B\times_B\R_B\simeq \R\times \R\times B\to B$ is again a trivial bundle.  The trivial sub-bundle $R_<\times B\to B$ is a locally constant sub-bundle of $\R_B\times_B\R_B\to B$ which defines a strict total order on $\R_B$. Fibrewise, this order is nothing more than the canonical order on $\R$.
\end{example}

\begin{example}
If $\alpha:G\to \mathrm{Homeo}_+(\R)$ is a group morphism, $G$ acts on $\R$ and $R_<$. Then, there is a canonical strict total order $R_\alpha$ on $\R\times_G\widetilde B$ given by $R_\alpha=R_<\times_G\widetilde B$.
\end{example}

%\begin{example}\label{exRB}
%Let $E\subset \R_B$ be any locally constant sub-bundle, %forc�ment trivial !
%by lemma \ref{restrictorder} we can restrict the order of $\R_B$ to get a strict total order on $E$.
%\end{example}

\bigskip

We can apply Definition \ref{deforder} to $G$-spaces. 

\begin{definition}
\label{ordered gset}
A $G$-space $X$ is {\em orderable} if there exists a $G$-invariant subspace $R \subset X \times X$ satisfying the axioms of a strict total order.

Suppose that $X_1$ and $X_2$ are ordered bundles with order relations $R_1 \subset X_1 \times X_1$ and $R_2 \subset X_2 \times X_2$. A map $f:X_1 \rightarrow X_2$ {\em order-preserving} if the map $f \times f:X_1 \times X_1 \to X_2 \times X_2$ satisfies $(f \times f)(R_1) \subset R_2$.
\end{definition}

In the case of a $G$-space, a relation $R$ provides a strict total ordering $<$ of the underlying space $X$.  Moreover, since $R$ is $G$-invariant, the ordering is invariant under the right action of $G$: for all $x, y \in X$, $x<y$ implies $x \cdot g < y \cdot g$.

\begin{example}[Right-orderable groups]
Recall the right $G$-set $G_r$ from Example \ref{exGright}. 
The group $G$ is right-orderable if and only if $G_r$ is orderable as a $G$-set.  If $G$ has ordering $<$, there is a corresponding $R \subset G_r \times G_r$ defined by $(g, h) \in R$ if and only if $g<h$.   Conversely, given $R \subset G_r \times G_r$ since $R$ is $G$-invariant the same rule defines a right-invariant ordering $<$ of the elements of $G$.
\end{example}

\begin{example}[Bi-orderable groups]
\label{ex:bi-orderable groups}
Recall the right $G\times G$-set $_{\ell}G_r$ from Example \ref{exGbi}. 
The group $G$ is bi-orderable if and only if $_{\ell}G_r$ is an orderable $(G \times G)$-set.  If $G$ is bi-orderable with ordering $<$, define $R \subset \protect{ _{\ell}G_r} \times  \protect{ _{\ell}G_r}$ according to the rule $(g,h) \in R$ if and only if $g<h$.  Since the ordering of $G$ is bi-invariant, $R$ is $(G \times G)$-invariant.  

Conversely, if $\protect{ _{\ell}G_r}$ is orderable, then  $R \subset \protect{ _{\ell}G_r} \times  \protect{ _{\ell}G_r}$ defines a bi-ordering of $G$ by the same rule: $g<h$ if and only if $(g,h) \in R$.  Two-sided invariance of this ordering follows from $(G \times G)$-invariance of $R$.  For if $g<h$, then $(g, h) \in R$ and for all $f \in G$ we have
\[ (g,h) \cdot (f^{-1}, id) \in R \iff (fg, fh) \in R \iff fg<fh. 
\]
The calculation to show right-invariance is similar.
\end{example}

\begin{proposition}
\label{equivalence of definitions}
Let $p:E \rightarrow B$ be a locally constant bundle and $X$ its fibre at $b_0$. Then $p:E \rightarrow B$ is orderable as a bundle if and only if $X$ is orderable as a $G$-space.
\end{proposition}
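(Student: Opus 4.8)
The plan is to reduce the statement to the observation that Definition \ref{deforder} is phrased entirely in categorical language, so that it is transported across the equivalence of categories of Proposition \ref{equivcont}. Recall that $F:\mathbf{LCB}(B)\to G\textrm{-}\mathbf{Space}$ sends $p:E\to B$ to its fibre $X=p^{-1}(b_0)$, and that $F$ and $U$ are inverse equivalences. Being an equivalence, $F$ preserves the terminal object, products, fibre products, images, diagonals, and the subobject posets, and in particular sends the product $E\times_B E$ to $X\times X$. As already noted at the end of Section \ref{notation}, it therefore restricts to a bijection between the locally constant sub-bundles $R\subset E\times_B E$ and the sub-$G$-spaces $F(R)\subset X\times X$, a bijection that respects intersections, unions, images, the diagonal, and the opposite relation.

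First I would check that each of the four clauses of Definition \ref{deforder} is preserved by $F$ applied to such an $R$. Transitivity is the inclusion $R\circ R\subset R$; since composition of relations is built from a fibre product followed by the image of a map, and $F$ preserves both, as well as the containment of subobjects, we get $F(R)\circ F(R)\subset F(R)$ if and only if $R\circ R\subset R$. Likewise $F$ commutes with the opposite relation (composition with the swap isomorphism of $X\times X$) and with the diagonal, so $F(R)^{op}=F(R^{op})$ and $F(\Delta_E)=\Delta_X$. The intersections and the empty (initial) object appearing in irreflexivity $\Delta_E\cap R=\emptyset$ and antisymmetry $R\cap R^{op}=\emptyset$ are pullbacks and initial objects, hence preserved, so these two axioms transport verbatim.

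The step I expect to require the most care is totality, $R\cup R^{op}=(E\times_B E)\setminus\Delta_E$, since it involves a union and a complement rather than the plainly categorical operations above. I would handle this by rewriting it, using irreflexivity and antisymmetry, as the disjoint decomposition $E\times_B E=R\sqcup R^{op}\sqcup\Delta_E$ recorded just after Definition \ref{deforder}: the join of the three pairwise-disjoint subobjects equals the whole object. A union of subobjects is the image of the map out of their coproduct, and ``being all of $E\times_B E$'' means the corresponding subobject is isomorphic to the top one; all of these are categorical, hence preserved by $F$. Thus $R$ satisfies totality exactly when $F(R)$ does.

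Combining the four checks, $R$ is a strict total order on $E$ in the sense of Definition \ref{ordered bundle} if and only if $F(R)$ is a strict total order on $X$ in the sense of Definition \ref{ordered gset}. Since $U$ is inverse to $F$, running the same argument through $U$ shows that $R\mapsto F(R)$ is a bijection between the orders on $E$ and the orders on $X$. In particular one of these sets is nonempty exactly when the other is, which is precisely the assertion that $E$ is orderable as a bundle if and only if $X$ is orderable as a $G$-space.
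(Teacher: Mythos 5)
Your proposal is correct and follows essentially the same route as the paper, whose proof is simply the observation that each axiom of Definition \ref{deforder} is expressed diagrammatically and is therefore preserved under the equivalence $F$ of Proposition \ref{equivcont}. You have merely spelled out the details the paper leaves implicit, including the one genuinely delicate point (totality), which you correctly handle via the disjoint decomposition $E\times_B E = R\sqcup R^{op}\sqcup \Delta_E$ noted after Definition \ref{deforder}.
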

\begin{proof}
Each of the properties (1)-(5) of ordered bundles and ordered $G$-spaces is preserved under equivalence of categories, since each of properties (1)-(5) of the subobject $R$ can be defined diagrammatically.
\end{proof}

We also point out that order-preserving maps of bundles are sent to order preserving maps of $G$-sets by the functor $F$, and reciprocally for the functor $U$.

\section{Results}

We begin with some well-known lemmas, for example see \cite[Proposition 2.1]{Nav}.

\begin{lemma}
\label{include in R}
If $(X, <)$ is a countable strictly totally ordered set, then there exists an order-preserving embedding $X \hookrightarrow \mathbb{R}$.
\end{lemma}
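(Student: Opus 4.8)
The plan is to exploit the fact that $(\Q,<)$ is a countable dense linear order without endpoints into which every countable strict total order embeds (Cantor's theorem); composing with the inclusion $\Q\hookrightarrow\R$ then yields the claim. Since $X$ is countable, I would first fix an enumeration $X=\{x_0,x_1,x_2,\dots\}$ (finite or infinite).

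The cleanest route I would take is to write down an explicit order-preserving map by a weighted-counting formula. Define $f:X\to\R$ by
\[
f(x)=\sum_{n\,:\,x_n<x}2^{-n}.
\]
This series is dominated by $\sum_n 2^{-n}$ and hence converges absolutely, so $f$ is well defined. To check that $f$ is order-preserving, take $x<y$ in $X$. Transitivity gives the inclusion $\{n:x_n<x\}\subseteq\{n:x_n<y\}$, so
\[
f(y)-f(x)=\sum_{n\,:\,x\le x_n<y}2^{-n},
\]
the two index sets differing exactly in those $n$ with $x\le x_n<y$. Writing $x=x_m$, irreflexivity gives $m\notin\{n:x_n<x\}$, while $x_m=x<y$ shows $m\in\{n:x_n<y\}$; hence $m$ lies in the difference set. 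As every term is positive, this forces $f(y)-f(x)\ge 2^{-m}>0$, i.e. $f(x)<f(y)$.

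It then remains to observe that an order-preserving map out of a strict total order is automatically injective: if $x\neq y$, then by totality either $x<y$ or $y<x$, whence $f(x)<f(y)$ or $f(y)<f(x)$, and in particular $f(x)\neq f(y)$. Thus $f$ is an order-preserving embedding $X\hookrightarrow\R$.

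I would also mention the alternative classical back-and-forth construction, which builds $f$ inductively by placing $x_n$ at a rational strictly between (or beyond) the already-assigned values of $x_0,\dots,x_{n-1}$, using the density and the lack of endpoints of $\Q$ at each step. The only point requiring any care — and the natural main obstacle in either version — is the case analysis in the inductive argument (whether $x_n$ falls below all, above all, or into a gap of the previously placed points), or, in the formula version, the bookkeeping that identifies the symmetric difference of the two index sets. Both are routine once set up, and convergence of the series is immediate from the geometric bound.
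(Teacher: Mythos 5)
Your proof is correct, but it takes a genuinely different route from the paper. The paper's proof is the inductive placement you mention only in passing at the end: enumerate $X$, set $t(x_0)=0$, and place each new point $x_{i+1}$ at $\max+1$, $\min-1$, or the midpoint of the gap between its immediate neighbours among the already-placed points --- exactly the case analysis you flag as the delicate step. Your primary argument instead gives the closed-form weighted-counting map $f(x)=\sum_{n:\,x_n<x}2^{-n}$, which sidesteps induction and case analysis entirely: monotonicity reduces to the observation that for $x=x_m<y$ the index $m$ lies in the symmetric difference of the two index sets (your use of irreflexivity and totality there is accurate, and the argument is robust even if the enumeration has repetitions), and injectivity is automatic from totality. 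What each buys: the paper's construction is elementary and geometrically transparent, producing dyadic rational values, but requires verifying three cases and implicitly that the ``gap'' neighbours $x_j<x_{i+1}<x_k$ exist among finitely many placed points; your formula is shorter, uniform, and yields a map into the bounded interval $[0,2]$, at the mild cost of invoking absolute convergence and the term-by-term comparison of nested series. Both are complete proofs; yours is arguably the cleaner write-up of this standard fact.
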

\begin{proof}
Given an enumeration of $X = \{ x_0, x_1, \ldots\}$, construct an order preserving map $t:X \rightarrow \mathbb{R}$ as follows.  Set $t(x_0)=0$, and if $t(x_0), \ldots, t(x_i)$ have already been defined, there are three cases to consider when defining $t(x_{i+1})$.  If $x_{i+1}$ is either larger or smaller than all of $\{x_0, \ldots, x_i\}$, set
\begin{displaymath}
   t(x_{i+1}) = \left\{
     \begin{array}{ll}
       \mathrm{max}\{t(x_0), \ldots , t(x_i) \} +1 & \mbox{ if } x_{i+1} > \mathrm{max}\{x_0, \ldots , x_i \} \\
       \mathrm{min}\{t(x_0), \ldots , t(x_i) \} -1 & \mbox{ if } x_{i+1} < \mathrm{min}\{x_0, \ldots , x_i \} \\

     \end{array}
   \right.
\end{displaymath} 
If neither of these cases hold, then there exist $j, k \in \{ 0, \ldots, i \}$ such that $x_j < x_{i+1} < x_k$ and there is no $i_0 \in \{ 0, \ldots ,i \}$ such that $g_j < g_{i_0} < g_k$.  In this case set
\[t(x_{i+1})= \cfrac{t(x_j)+t(x_k)}{2}
\]
This map preserves the order by construction and is injective.
\end{proof}

\begin{lemma}\cite[{\em cf.} Lemma 1.10]{Fa}
\label{lemma:map to R}
If $X$ is an orderable $G$-set and $t :X \rightarrow \mathbb{R}$ an order-preserving embedding, then there exists a right action of $G$ on $\mathbb{R}$ by order-preserving homeomorphisms making $t$ a $G$-equivariant map of $G$-sets.
\end{lemma}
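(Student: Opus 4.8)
The plan is to produce the action on $\mathbb{R}$ by extending, for each $g\in G$, the partial bijection that right translation by $g$ induces on the image $t(X)$, and then to check that these extensions compose correctly to give a right action by order-preserving homeomorphisms.

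First I would record the basic observation. Since the order on $X$ is $G$-invariant, for each $g\in G$ the right translation $\rho_g\colon X\to X$, $\rho_g(x)=x\cdot g$, is an order-preserving bijection (its inverse is $\rho_{g^{-1}}$). Transporting along $t$, set $Y=t(X)\subseteq\mathbb{R}$ and define $\phi_g\colon Y\to Y$ by $\phi_g(t(x))=t(x\cdot g)$; this is an order-preserving, hence strictly increasing, bijection of $Y$, and from the right-action law $(x\cdot g)\cdot h = x\cdot(gh)$ one reads off $\phi_{gh}=\phi_h\circ\phi_g$ and $\phi_e=\mathrm{id}_Y$.

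Next I would extend each $\phi_g$ to a homeomorphism $\Phi_g$ of all of $\mathbb{R}$ in two steps. A strictly increasing bijection of $Y$ admits a unique continuous strictly increasing extension to the closure $\overline{Y}$, given by $\overline{\phi_g}(r)=\sup\{\phi_g(y): y\in Y,\ y\le r\}$ (with the obvious convention at an infimum of $Y$); one checks this is a bijection $\overline{Y}\to\overline{Y}$ carrying limit points to limit points. The complement $\mathbb{R}\setminus\overline{Y}$ is a disjoint union of open intervals, the \emph{gaps}, each having endpoints in $\overline{Y}\cup\{\pm\infty\}$. Since $\overline{\phi_g}$ sends endpoints to endpoints it permutes these gaps, and I would fill in each bounded gap by the affine map matching the two endpoint values and each unbounded gap by the corresponding translation. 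This yields an order-preserving homeomorphism $\Phi_g\colon\mathbb{R}\to\mathbb{R}$ restricting to $\phi_g$ on $Y$. Note that the constant-on-gaps behaviour of the bare supremum formula is exactly what forces the affine interpolation: without it the extension would fail to be injective.

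Finally I would set $r\cdot g := \Phi_g(r)$ and verify the axioms. Order-preservation and $\Phi_e=\mathrm{id}$ are immediate, and $G$-equivariance of $t$ holds by construction, since $t(x)\cdot g=\Phi_g(t(x))=\phi_g(t(x))=t(x\cdot g)$. The crux, and the step I expect to be the main obstacle, is the right-action identity $\Phi_{gh}=\Phi_h\circ\Phi_g$. On the dense subset $Y$ it follows from $\phi_{gh}=\phi_h\circ\phi_g$, hence it persists on $\overline{Y}$ by continuity; on a gap one must use that $\Phi_g$ maps gaps homeomorphically onto gaps and that a composite of affine maps sending the appropriate endpoints to the appropriate endpoints is again the single affine map used to define $\Phi_{gh}$. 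The whole argument hinges on the fact that affine interpolation on gaps is canonical, determined solely by the endpoint data, so that it is automatically compatible with composition; this is precisely what makes the fibrewise extensions assemble into a genuine action rather than merely an unrelated family of homeomorphisms.
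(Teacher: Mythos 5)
Your construction follows the paper's proof step for step (the paper acts trivially outside the span of $t(X)$, extends to $\overline{t(X)}$ by monotone limits, and interpolates affinely on the complementary gaps), and your verification of $\Phi_{gh}=\Phi_h\circ\Phi_g$ via canonicity of the affine interpolation is in fact more explicit than the paper, which omits that check entirely. But the step you dispatch with ``one checks this is a bijection $\overline{Y}\to\overline{Y}$ carrying limit points to limit points'' is a genuine gap, and it --- not the composition law --- is the real obstacle. The image under $\phi_g$ of a cut of $Y$ whose lower side has no maximum and whose upper side has no minimum is again such a cut, but nothing forces its Euclidean width to be zero, so the claimed continuous extension to $\overline{Y}$ need not exist at all. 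Concretely: for $k\in\mathbb{Z}$ let $Y_k\subset\mathbb{R}$ be a copy of the ordered set $\omega+\omega^{*}$ (an increasing sequence followed by a decreasing one), placed so that $Y_0=\{-\tfrac1n:n\ge3\}\cup\{\tfrac1n:n\ge3\}$ accumulates at $0$ from both sides, $Y_1=\{1-\tfrac1n:n\ge3\}\cup\{2+\tfrac1n:n\ge3\}$ has its lower half accumulating at $1$ from below and its upper half at $2$ from above, and $Y_k\subset(3k,3k+1)$ for $k\ne0,1$. Let $X=\bigcup_k Y_k$ with $t$ the inclusion and let $G=\mathbb{Z}$ act by shifting $Y_k$ to $Y_{k+1}$ via the order isomorphisms; this is an orderable $G$-set and $t$ is an order-preserving embedding. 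Any homeomorphism of $\mathbb{R}$ extending $\phi_1$ would satisfy $\Phi(-\tfrac1n)=1-\tfrac1n\to1$ and $\Phi(\tfrac1n)=2+\tfrac1n\to2$ while $\pm\tfrac1n\to0$, which is absurd; your supremum formula returns a map on $\overline{Y}$ that is neither continuous nor surjective at $0$. So for an arbitrary embedding $t$ the lemma itself fails.

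In fairness, the paper's own proof has exactly the same soft spot: the sentence ``It is not hard to see that this definition is independent of our choice of sequence'' is false in the example above, where the limits from below and from above disagree. The statement is rescued by the specific embedding of Lemma \ref{include in R}: with the midpoint rule, whenever a cut of $t(X)$ has a lower side with no maximum and an upper side with no minimum, each newly placed record point of either side has as placed neighbours the current extremes of the two sides and therefore bisects the current gap between them; since this happens infinitely often, every such cut has width zero. Consequently two-sided accumulation points map to two-sided accumulation points, your extension to $\overline{Y}$ is well defined and continuous, and the remainder of your argument (permutation of gaps, canonical affine interpolation, density giving the action identity) goes through verbatim. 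Since Theorem \ref{thm:main result} only ever invokes Lemma \ref{lemma:map to R} together with the embedding from Lemma \ref{include in R}, the downstream results are unaffected, but both your proof and the paper's need this extra property of $t$ --- either as a hypothesis or by restating the lemma for that particular embedding --- to be correct.
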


\begin{proof} 

 Outside of the  interval 
$$I = (  \mathrm{inf}( t( X)),  \mathrm{sup}( t( X) ))$$
define the action of $G$ to be the trivial action.

Inside the interval $I$ we proceed as follows.  Evidently $G$ acts in an order-preserving way on $t(X)$ according to the rule $(t(e)) \cdot g = t(e \cdot g)$, we extend this action to the closure $\overline{t(X)}$ in such a way that every $g \in G$ acts continuously on $\overline{t(X)}$ .  Specifically, for $x \in \overline{t(X)} \setminus X$ choose a monotone sequence of points $t(x_i)$ converging to $x$ from above (or below).   Assume that the sequence is monotone increasing (the case of monotone decreasing is similar), since $x \in I$ there exists $x' \in X$ with $t(x') > x$.   For every element $g \in G$, define the action of $g$ on $x$ as follows:
\[ x \cdot g = \lim_{i \to \infty} t(x_i \cdot g)
\]
This limit exists because the sequence $\{ t(e_i \cdot g) \}$ is monotone increasing and bounded above by $t(x' \cdot g)$. It is not hard to see that this definition is independent of our choice of sequence $\{ t(x_i) \}$.

Now $\overline{t(X)}$ is closed, and the complement $\mathbb{R} \setminus \overline{t(X)}$ is a union of open intervals with the $G$-action defined at their endpoints.  Extend the $G$-action affinely across all intervals, this defines the required action of $G$ on $\mathbb{R}$.
\end{proof}

The next lemma allows us to understand those bundles with structure group $\mathrm{Homeo}_+(\mathbb{R})$.

\begin{lemma}\cite[Theorem 1.1.1]{Ham}
\label{lemma:contractible}
Equipped with the compact open topology, the group $\mathrm{Homeo}_+(\mathbb{R})$ is a contractible space.
\end{lemma}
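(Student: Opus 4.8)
The plan is to realize $\mathrm{Homeo}_+(\R)$ as a convex subset of a topological vector space and then contract it to the identity by a straight-line homotopy. The starting observation is that an orientation-preserving self-homeomorphism of $\R$ is exactly a continuous, strictly increasing bijection $\R\to\R$, so $\mathrm{Homeo}_+(\R)$ sits naturally inside the space $C(\R,\R)$ of continuous real-valued functions. Since $\R$ is locally compact Hausdorff and the target is metric, the compact-open topology on $C(\R,\R)$ coincides with the topology of uniform convergence on compact sets, and it makes $C(\R,\R)$ a topological vector space; the compact-open topology on $\mathrm{Homeo}_+(\R)$ is then precisely the subspace topology.

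First I would check that $\mathrm{Homeo}_+(\R)$ is convex in $C(\R,\R)$. Given $f,g\in\mathrm{Homeo}_+(\R)$ and $t\in[0,1]$, the function $h_t=(1-t)f+tg$ is continuous, and for $x<y$ we have $h_t(y)-h_t(x)=(1-t)(f(y)-f(x))+t(g(y)-g(x))>0$, since the two coefficients sum to $1$ and each difference is positive; hence $h_t$ is strictly increasing. Moreover $h_t(x)\to\pm\infty$ as $x\to\pm\infty$ because both $f$ and $g$ do, so $h_t$ is a continuous strictly increasing surjection, i.e. an element of $\mathrm{Homeo}_+(\R)$.

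With convexity in hand, fix the identity $\mathrm{id}\in\mathrm{Homeo}_+(\R)$ and define the contraction
\[ H:\mathrm{Homeo}_+(\R)\times[0,1]\longrightarrow\mathrm{Homeo}_+(\R),\qquad H(f,t)=(1-t)f+t\,\mathrm{id}. \]
By convexity this lands in $\mathrm{Homeo}_+(\R)$, and $H(f,0)=f$ while $H(f,1)=\mathrm{id}$ for every $f$; note also that $H(\mathrm{id},t)=\mathrm{id}$ for all $t$, so the contraction fixes the base point. It remains to see that $H$ is continuous. Since addition and scalar multiplication are jointly continuous for the compact-open topology on $C(\R,\R)$, the map $(f,t)\mapsto(1-t)f+t\,\mathrm{id}$ is continuous into $C(\R,\R)$, and because it takes values in $\mathrm{Homeo}_+(\R)$ it is continuous into the subspace. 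Concretely, on a compact $K\subset\R$ one estimates $|H(f',t')(x)-H(f,t)(x)|\le (1-t')\,\|f'-f\|_K+|t-t'|\,(\|f\|_K+\sup_{x\in K}|x|)$, where $\|\cdot\|_K$ denotes the sup-norm on $K$; this is small whenever $f'$ is uniformly close to $f$ on $K$ and $t'$ is close to $t$.

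The only real content is the convexity verification, together with the observation that surjectivity is automatic from the behaviour at $\pm\infty$; once $\mathrm{Homeo}_+(\R)$ is known to be convex, contractibility is the standard fact that a convex subspace of a topological vector space deformation retracts onto any of its points. I do not expect a serious obstacle here. The one pitfall to avoid is conflating $\mathrm{Homeo}_+(\R)$ with the full space of all homeomorphisms of $\R$, where the orientation-reversing maps would destroy convexity; restricting to the orientation-preserving, i.e. strictly increasing, maps is exactly what makes the straight-line homotopy admissible.
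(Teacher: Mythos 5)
Your proof is correct and is essentially the same argument as the paper's: the paper also contracts $\mathrm{Homeo}_+(\mathbb{R})$ via the straight-line homotopy $H(f,t)=(1-t)f+t\,\mathrm{id}$. You supply the verifications the paper leaves implicit (closure of $\mathrm{Homeo}_+(\mathbb{R})$ under convex combinations, surjectivity from the behaviour at $\pm\infty$, and continuity of $H$ in the compact-open topology), which is a welcome tightening rather than a different route.
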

\begin{proof}
For every $f \in \mathrm{Homeo}_+(\mathbb{R})$ and $t \in [0, 1]$ define
\[ H(f, t) = (1-t)f(x) +tx.
\]
If $f$ is order-preserving, then $ H(f, t)$ is order-preserving for all $t \in [0,1]$, and $H(f,0) = f$ while $H(f,1) = id$.  Therefore $\mathrm{Homeo}_+(\mathbb{R})$ is contractible.
\end{proof}

In the following proposition and theorem, we will use the notation $p:E \rightarrow B$ to denote a bundle that is isomorphic to 
\[ U(X) = X\times_G\widetilde B \to B 
\]
for some $G$-set $X$.
In particular, the fibre of $E$ over $b_0$ is isomorphic to $X$.

\begin{proposition}
\label{prop:two maps}
For an ordered $G$-set $X$, the following are equivalent:
\begin{enumerate}
\item There exists an order-preserving injective map $t: X \hookrightarrow \mathbb{R}$.
\item There exists an order-preserving injective map of bundles $f:E \hookrightarrow \mathbb{R} \times B$.  Here, $ \mathbb{R} \times B$ is equipped with the natural order of $\mathbb{R}$ on the fibres.
\end{enumerate}
\end{proposition}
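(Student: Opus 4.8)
The plan is to transport the whole problem across the equivalence of Proposition~\ref{equivcont} between $\mathbf{LCB}(B)$ and $G\textrm{-}\mathbf{Space}$, using the functors $F$ and $U$ together with the remark preceding Section~4 that $F$ and $U$ send order-preserving maps to order-preserving maps. Since $F$ and $U$ are mutually inverse equivalences, I would prove the two implications separately, and I expect $(2)\Rightarrow(1)$ to be immediate while $(1)\Rightarrow(2)$ carries all the content. The two extra ingredients needed beyond the equivalence are Lemma~\ref{lemma:map to R}, which lets me enlarge an embedding $t:X\hookrightarrow\mathbb{R}$ into a genuine $G$-action on $\mathbb{R}$, and Lemma~\ref{lemma:contractible}, the contractibility of $\mathrm{Homeo}_+(\mathbb{R})$, which will force a certain $\mathbb{R}$-bundle to be trivial.

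For $(2)\Rightarrow(1)$, I would simply apply the fibre functor $F$. The fibre of the trivial bundle $\mathbb{R}\times B$ over $b_0$ is $\mathbb{R}$ carrying the trivial $G$-action (the bundle is trivial, so its monodromy is trivial) and its canonical order. Given an order-preserving injective bundle map $f:E\hookrightarrow\mathbb{R}\times B$, the map $t=F(f):X\to\mathbb{R}$ is the restriction of $f$ to the fibre over $b_0$, hence injective, and it is order-preserving by the cited remark. This is precisely the map demanded by $(1)$.

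For $(1)\Rightarrow(2)$, I would begin with an order-preserving injection $t:X\hookrightarrow\mathbb{R}$ and first promote $\mathbb{R}$ to a $G$-space. By Lemma~\ref{lemma:map to R} there is a right action of $G$ on $\mathbb{R}$ by order-preserving homeomorphisms making $t$ equivariant. Equipping $\mathbb{R}$ with this action and with the canonical order $R_<$, which is $G$-invariant exactly because $G$ acts by order-preserving homeomorphisms, turns $t:X\to\mathbb{R}$ into an order-preserving map of ordered $G$-spaces. Applying $U$ yields an injective (visible from the formula $U(t)([\alpha],x)=([\alpha],t(x))$), order-preserving map of bundles $U(t):E=U(X)\to U(\mathbb{R})$, where $U(\mathbb{R})=\mathbb{R}\times_G\widetilde B$ carries the order $R_<\times_G\widetilde B$.

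The hard part, and the step I would treat most carefully, is to identify $U(\mathbb{R})$, \emph{as an ordered bundle}, with $\mathbb{R}\times B$ and its fibrewise order, so that post-composing $U(t)$ with this identification produces the required $E\hookrightarrow\mathbb{R}\times B$. The bundle $U(\mathbb{R})$ is an $\mathbb{R}$-bundle whose structure group reduces to $\mathrm{Homeo}_+(\mathbb{R})$, since $G$ acts through that group; concretely, trivializing it amounts to exhibiting a $G$-equivariant map $\widetilde B\to\mathrm{Homeo}_+(\mathbb{R})$. Contractibility of $\mathrm{Homeo}_+(\mathbb{R})$ (Lemma~\ref{lemma:contractible}) makes all the obstructions to such a map vanish, giving a global trivialization $U(\mathbb{R})\cong\mathbb{R}\times B$ over $B$. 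The essential point to check is that this trivialization is \emph{fibrewise order-preserving}: because its transition data lie in $\mathrm{Homeo}_+(\mathbb{R})$, the order $R_<\times_G\widetilde B$ is carried to the standard fibrewise order on $\mathbb{R}\times B$, so the identification is an isomorphism of ordered bundles. I anticipate that the only real subtlety lies here, in simultaneously using contractibility to trivialize and verifying that the trivialization respects the order, rather than in the functorial bookkeeping, which is routine given Proposition~\ref{equivcont}.
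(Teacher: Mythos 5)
Your proposal is correct and follows essentially the same route as the paper: the easy direction $(2)\Rightarrow(1)$ by restriction to the fibre over $b_0$, and $(1)\Rightarrow(2)$ by using Lemma~\ref{lemma:map to R} to make $t$ equivariant, transporting across the equivalence to get a locally constant order-preserving map $E\to\R\times_G\widetilde B$, and then invoking contractibility of $\mathrm{Homeo}_+(\R)$ (Lemma~\ref{lemma:contractible}, via Steenrod) to trivialize, with the key observation---also the paper's---that the trivializing isomorphism takes values in the structure group $\mathrm{Homeo}_+(\R)$ and hence preserves the fibrewise order.
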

\begin{proof}
By Lemma \ref{lemma:map to R}, $\mathbb{R}$ can be equipped with a $G$-action so that $t$ is an equivariant map.  Thus the map $t:X \rightarrow \mathbb{R}$ can be considered as a map of $G$-spaces.  

Correspondingly there is an order-preserving locally constant map $f'$ of locally constant bundles
\[
\xymatrix{ 
E 
\ar[rr]^{f'} 
\ar[dr]_{p} 
&& \R\times_G\widetilde{B}
\ar[dl]^{p' } \\ 
& B. }
\]
The structure group of $\R\times_G\widetilde{B}$ is $\mathrm{Homeo}_+(\mathbb{R})$, which is contractible by Lemma \ref{lemma:contractible}. So the bundle is trivial \cite[Corollary 12.3]{Steenrod}.  
There exists an isomorphism $\R\times_G\widetilde{B}\simeq \R\times B$ (which is not locally constant) and this isomorphism is given by a function with values in the structure group $\mathrm{Homeo}_+(\mathbb{R})$, hence it preserves the order.
By composition, we deduced an injective map $f:E \rightarrow \mathbb{R} \times B$, which is still order preserving (but no longer locally constant).

On the other hand, suppose we an order-preserving injective map $f: E \rightarrow \mathbb{R} \times B$, say $f= (f_1, f_2)$.  Thus we have a map $f_1: E \rightarrow \mathbb{R}$ which respects the order on each fibre, and by restriction to the fibre $X$ we get an order-preserving map $t: X \hookrightarrow \mathbb{R}$.
\end{proof}

We arrive now at our main theorem.

\begin{theorem}
\label{thm:main result}
For a countable $G$-set $X$, the following are equivalent.
\begin{enumerate}
\item $X$ is orderable.
\item $p:E \rightarrow B$ is orderable as a locally constant bundle.
\item There exists an embedding $E \rightarrow \R_B$ such that 
\[\xymatrix{ 
E 
\ar[rr]^{f} 
\ar[dr]_{p} 
&& \R_B
\ar[dl]^{\pi_2 } \\ 
& B }
\]
commutes.
\end{enumerate}
\end{theorem}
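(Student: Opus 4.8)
The plan is to establish the cycle of implications $(1)\Leftrightarrow(2)$, $(1)\Rightarrow(3)$ and $(3)\Rightarrow(1)$. The equivalence $(1)\Leftrightarrow(2)$ needs no work: it is precisely Proposition \ref{equivalence of definitions}, since by hypothesis $X$ is the fibre of $E=U(X)$ over $b_0$. It is worth recording at the outset that countability of $X$ will be used only in the implication $(1)\Rightarrow(3)$; the remaining implications hold for arbitrary $G$-sets.

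For $(1)\Rightarrow(3)$ I would argue as follows. Assuming $X$ is orderable, its underlying ordered set is countable, so Lemma \ref{include in R} supplies an order-preserving injection $t\colon X\hookrightarrow\R$. Proposition \ref{prop:two maps} then upgrades $t$ to an order-preserving injective bundle map $f\colon E\hookrightarrow\R\times B=\R_B$. Such a map is in particular an embedding over $B$, i.e. it makes the triangle with $p$ and $\pi_2$ commute, which is exactly condition $(3)$.

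The substance of the theorem is the converse $(3)\Rightarrow(1)$, where the given embedding $f=(f_1,f_2)\colon E\hookrightarrow\R_B$ is merely a map over $B$ and carries \emph{a priori} no order-theoretic or local-constancy information. I would use $f_1\colon E\to\R$ to define a candidate order on the bundle, namely the subset $R\subset E\times_BE$ of pairs $(e,e')$ lying over a common point of $B$ with $f_1(e)<f_1(e')$; equivalently $R$ is the preimage of $R_<\subset\R^2$ under the fibrewise map $(f_1\circ\pi_1,f_1\circ\pi_2)$. Because $f$ is injective and lies over $B$, its restriction $f_1$ is injective on each fibre, so fibrewise $R$ is the pullback of the canonical order of $\R$ along an injection, hence a strict total order; consequently $E\times_BE=R\sqcup R^{op}\sqcup\Delta_E$ and the four axioms of Definition \ref{deforder} hold fibrewise.

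The point requiring genuine argument is that $R$ is a \emph{locally constant} sub-bundle rather than just a fibrewise order, and I expect this to be the main obstacle, since it is the only place where an unconstrained embedding must be promoted to an honest locally constant structure. Here I would exploit that $E$, and hence $E\times_BE$, is a covering space of $B$. As $R_<$ is open in $\R^2$, the set $R$ is open in $E\times_BE$, as is $R^{op}$; and $\Delta_E$ is clopen in the covering $E\times_BE$ (locally $E\times_BE\cong V\times F\times F$ with $F$ discrete, and the diagonal is $V\times\Delta_F$), so the decomposition $E\times_BE=R\sqcup R^{op}\sqcup\Delta_E$ forces $R$ to be clopen. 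Using local path connectedness of $B$, I would choose evenly covered \emph{connected} neighbourhoods $V$ for $E\times_BE\to B$; over such $V$ a clopen subset of $V\times F$ must be a union of sheets $V\times F'$ because $V$ is connected, whence $R\to B$ is locally trivial. By the earlier observation that every covering space is a locally constant bundle, $R$ is therefore a locally constant order sub-bundle of $E\times_BE$. Taking the fibre over $b_0$, where a sub-bundle restricts to a $G$-invariant sub-$G$-set and the categorical order axioms restrict to the corresponding axioms on $X$, this endows $X$ with a $G$-invariant strict total order, giving $(1)$ (and, equivalently, exhibits $E$ as orderable, giving $(2)$).
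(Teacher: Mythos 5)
Your proposal is correct, and its skeleton matches the paper's: $(1)\Leftrightarrow(2)$ via Proposition \ref{equivalence of definitions}, $(1)\Rightarrow(3)$ via Lemma \ref{include in R} followed by Proposition \ref{prop:two maps}, and for the converse both you and the authors pull back the canonical order $R_<$ along the embedding and face the same crux: the resulting relation $R\subset E\times_BE$ satisfies the order axioms fibrewise, but since $f$ is not locally constant one cannot invoke Lemma \ref{pbLCB}, and one must show directly that $R$ is a genuine sub-covering. Where you diverge is in the mechanism for this step. The paper argues with paths: it asserts that it suffices to show $R$ is a union of path components of $E\times_BE$, and rules out a path from $R$ to $R^{op}$ by composing with $g=f_1$ and applying the intermediate value theorem, the crossing $x(t)=y(t)$ contradicting fibrewise injectivity of $f$. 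You instead make a point-set argument: $R$ and $R^{op}$ are open as preimages of the open sets $R_<$ and $R_<^{op}$, the diagonal $\Delta_E$ is clopen because the fibres of the covering $E\times_BE\to B$ are discrete, so $R$ is clopen; then over connected evenly covered charts (available since $B$ is locally path connected) a clopen subset of $V\times F$ is a union of sheets, which establishes local triviality of $R\to B$ outright. Your route buys two things: it fills in the sufficiency claim the paper leaves implicit (that a union of components of a covering is a sub-covering), and it replaces the IVT argument by connectedness of charts, making the verification self-contained. You are also more precise on one point: the paper asserts that the diagonal part $D=(g\times g)^{-1}(\Delta_\R)$ of the pulled-back partition is empty, whereas injectivity of $f$ in fact gives $D=\Delta_E$ (what is empty is the off-diagonal part of $D$), which is exactly the identification your decomposition $E\times_BE=R\sqcup R^{op}\sqcup\Delta_E$ uses. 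Your closing step, passing to the fibre over $b_0$ to get a $G$-invariant order, is the same use of the equivalence of categories as in the paper.
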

\begin{proof}
The equivalence of statements (1) and (2) was already observed in Proposition \ref{equivalence of definitions}.
By Lemma \ref{include in R}, if $X$ is ordered and countable then there exists an injective, order-preserving map $X \rightarrow \mathbb{R}$.  By Proposition \ref{prop:two maps} There exists an embedding $E \hookrightarrow \mathbb{R} \times B$.  Thus (1) implies (3).  

We will finish the proof by proving that (3) implies (2).
Let $R'\subset E\times_BE$ be the pull back of the canonical order relation $R\times B\subset\R_B\times_B\R_B$ by the map $f:E\to \R_B$. Then $R'$ satisfies all the axioms of a strict total order, but because $f$ is not a locally constant map, we cannot use Lemma \ref{pbLCB} to deduce that $R'$ is a sub-covering space of $E\times_BE$.

To prove that $R'$ is indeed a sub-covering, it is sufficient to prove that it is a union of path connected components of $E\times_BE$.
Remark that a bundle map $f:E\to \R_B=\R\times B$ over $B$ is the same thing as a map $g:E\to \R$. From such a map $g$ the partition $R_<\sqcup R_<^{op}\sqcup \Delta_\R$ of $\R^2$ is pulled back by $g$ to a partition $R'\sqcup (R')^{op} \sqcup D$ of $E\times_BE$ where $D=(g\times g)^{-1}(\Delta_\R)$, but because $f$ is injective, $D=\emptyset$.

So $R'$ is a sub-covering if we prove that no path starting in $R'$ can arrive in $(R')^{op}$.
Let us assume that we have such a path; by composing with $g$, we obtain a path in $\R^2$ starting in $R_<$ and ending in $R_<^{op}$, i.e. a family of numbers $x(t)$ and $y(t)$ such that  $x(0)<y(0)$ and $x(1)>y(1)$.
Then, the topology of $\R$ is such that we must have $x(t)=y(t)$ for some $t$, but this would give two elements $e_1$ and $e_2$ such that $f(e_1)=f(e_1)$ and contradict the injectivity of $f$.

%Finally, since $\R_B$ is an ordered bundle (example \ref{exRB}), if there exists an embedding $E \hookrightarrow \mathbb{R} \times B$ as in (3), then $E$ is orderable by Lemma \ref{pullback bundle order}.
%
%\notemat{The embedding is not locally constant so we cannot use Lemma \ref{pullback bundle order}. This will prove only that $E$ is orderable as a bundle not as a locally constant bundle.}

\end{proof}

\begin{corollary}\cite[{\em cf.} Theorem 2.3]{Fa}
\label{cor:farrell}
Suppose $G$ is a countable group and $p:E \rightarrow B$ a covering space.

 The right cosets $p_*(\pi_1(E, e_0))\backslash G$ form an orderable right $G$-set if and only if 
there exists a bundle embedding $E \hookrightarrow \mathbb{R} \times B$ such that 

\[\xymatrix{ 
E 
\ar[rr]^{f} 
\ar[dr]_{p} 
&& \mathbb{R} \times B
\ar[dl]^{\pi_2 } \\ 
& B }
\]
commutes. In particular, $G$ is right orderable if and only if there is a bundle embedding $\widetilde{B} \hookrightarrow \mathbb{R} \times B$.
\end{corollary}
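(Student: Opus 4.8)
The plan is to deduce the statement directly from Theorem~\ref{thm:main result}, by passing from locally constant bundles to the special case of covering spaces and identifying the relevant fibre with a coset space. First I would recall that $\mathbf{Cov}(B)$ is a full subcategory of $\mathbf{LCB}(B)$, so any covering space $p:E\to B$ is in particular a locally constant bundle and Theorem~\ref{thm:main result} applies to it. Moreover, by Proposition~\ref{equivdiscrete} the functors $F$ and $U$ are inverse equivalences, so $E\cong U(X)$ for $X=F(E)=p^{-1}(b_0)$; this is exactly the setting in which Theorem~\ref{thm:main result} is phrased, since there $E$ was assumed isomorphic to $U(X)$.

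Next I would identify the $G$-set $X=p^{-1}(b_0)$. Taking $E$ connected, standard covering space theory (Example~\ref{coset}) supplies a canonical isomorphism of right $G$-sets $p^{-1}(b_0)\cong H\backslash G$, where $H=p_*(\pi_1(E,e_0))$: the right action on the fibre is by lifting loops, and the stabiliser of the basepoint $e_0$ is precisely $H$. Since $G$ is countable, $H\backslash G$ is countable, so $X$ is a countable $G$-set and all hypotheses of Theorem~\ref{thm:main result} are satisfied.

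With these identifications, the equivalence of statements (1) and (3) in Theorem~\ref{thm:main result} reads verbatim as the first assertion of the corollary: $H\backslash G$ is orderable as a right $G$-set if and only if there is a bundle embedding $f:E\hookrightarrow\mathbb{R}\times B$ with $\pi_2\circ f=p$ (recall that $\R_B=\R\times B$). To obtain the final ``in particular'' statement I would specialise to $E=\widetilde B$, whose fundamental group is trivial, so $H=\{1\}$ and $H\backslash G=G_r$ is the right $G$-set of Example~\ref{exGright}. As recorded in the example on right-orderable groups, $G_r$ is orderable if and only if $G$ is right-orderable; substituting this into the displayed equivalence gives the claim.

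I do not anticipate a genuine difficulty, as the corollary is an application of the main theorem already in hand. The only step demanding care is the identification $p^{-1}(b_0)\cong H\backslash G$, which is where connectedness of $E$ is used: for a disconnected covering the fibre is a disjoint union of coset spaces, and the single coset space $p_*(\pi_1(E,e_0))\backslash G$ then describes only the path component of $e_0$.
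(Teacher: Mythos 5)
Your proposal is correct and follows essentially the same route as the paper: identify the fibre $p^{-1}(b_0)$ with the countable right $G$-set $p_*(\pi_1(E,e_0))\backslash G$, apply the equivalence of (1) and (3) in Theorem~\ref{thm:main result}, and specialise to $E=\widetilde B$ (where the fibre is $G_r$) for the final claim. Your added remark that connectedness of $E$ is needed for the identification $p^{-1}(b_0)\cong H\backslash G$ is a point the paper leaves implicit, and it is a worthwhile clarification.
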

\begin{proof}
The fibre of the covering $p:E \rightarrow B$ can be naturally identified with the countable $G$-set $p_*(\pi_1(E, e_0))\backslash G$, which is isomorphic to $G$ in the case that $E = \widetilde{B}$.  The result now follows from the equivalence of (1) and (3) in Theorem \ref{thm:main result}.
\end{proof}

The generality of Theorem \ref{thm:main result} allows for an analysis of other ordered structures.
Recall the path space $P(B) \rightarrow B$ from Example \ref{exGbi}, whose fibre is the $G\times G$-set  $_{\ell}G_r$.

\begin{corollary}
A countable group $G$ is bi-orderable if and only if there exists an embedding $P(B) \rightarrow \mathbb{R} \times B \times B$ such that 

\[\xymatrix{ 
P(B)
\ar[rr]^{f} 
\ar[dr]_{p} 
&& \mathbb{R} \times (B \times B)
\ar[dl]^{\pi_2 } \\ 
& B \times B }
\]
commutes.
\end{corollary}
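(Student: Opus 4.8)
The plan is to apply Theorem \ref{thm:main result} not to the base $B$ itself but to the product $B \times B$, carrying along its fundamental group $\pi_1(B \times B, (b_0,b_0)) \cong G \times G$. The first thing I would observe is that $B \times B$ again satisfies the standing hypotheses fixed in Section \ref{notation}: a product of path connected, locally path connected, semilocally simply connected spaces has all three properties. Consequently the entire apparatus built for $(B, G)$ --- the fibre-functor equivalence between covering spaces (resp.\ locally constant bundles) over the base and right $(G \times G)$-sets (resp.\ $(G \times G)$-spaces), Definition \ref{deforder}, Proposition \ref{equivalence of definitions}, and Theorem \ref{thm:main result} --- holds verbatim after replacing $B$ by $B \times B$ and $G$ by $G \times G$. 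This base-independence is the conceptual heart of the argument.

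Next I would pin down the relevant objects. By Example \ref{exGbi}, the path space $p \colon P(B) \to B \times B$ is a covering space whose fibre over $(b_0,b_0)$ is canonically the right $(G \times G)$-set ${}_{\ell}G_r$; equivalently $P(B) \simeq U({}_{\ell}G_r)$ under the equivalence of categories. Since $G$ is countable, so is $G \times G$, and hence the $(G \times G)$-set ${}_{\ell}G_r$ --- whose underlying set is $G$ --- is countable. Thus the hypotheses of Theorem \ref{thm:main result} are met with $X = {}_{\ell}G_r$ and $E = P(B)$ over the base $B \times B$.

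The equivalence of statements (1) and (3) of Theorem \ref{thm:main result}, read in this setting, then asserts that ${}_{\ell}G_r$ is orderable as a $(G \times G)$-set if and only if there is a bundle embedding $f \colon P(B) \hookrightarrow \mathbb{R} \times (B \times B)$ over $B \times B$, that is, one making the stated triangle with $\pi_2$ commute. Finally, Example \ref{ex:bi-orderable groups} furnishes the last link: $G$ is bi-orderable if and only if ${}_{\ell}G_r$ is an orderable $(G \times G)$-set. Chaining the two equivalences yields the corollary.

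I expect no substantive obstacle here; the proof is formal once Theorem \ref{thm:main result} is in hand. The only points requiring genuine care are the two verifications flagged above --- that the base-space hypotheses pass to the product $B \times B$, and that Example \ref{exGbi}'s identification of the fibre of $P(B)$ with ${}_{\ell}G_r$ \emph{as a $(G \times G)$-set} (with the left-right action $f \cdot (g,h) = g^{-1}fh$) is correct --- since it is precisely this action that makes orderability of ${}_{\ell}G_r$ encode bi-invariance rather than mere right-invariance. Everything else is bookkeeping inherited from the countable right-orderable case treated in Corollary \ref{cor:farrell}.
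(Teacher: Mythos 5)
Your proof is correct and follows essentially the same route as the paper's: apply Theorem \ref{thm:main result} over the base $B \times B$ with group $G \times G$, identify the fibre of $P(B) \to B \times B$ with ${}_{\ell}G_r$ via Example \ref{exGbi}, and invoke Example \ref{ex:bi-orderable groups} to translate orderability of ${}_{\ell}G_r$ into bi-orderability of $G$. The only difference is that you make explicit the checks the paper leaves implicit (that $B \times B$ inherits the standing hypotheses and that ${}_{\ell}G_r$ is countable), which is sound diligence rather than a new argument.
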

\begin{proof}
The group $G$ is bi-orderable if and only if the fibre of $p:P(B) \rightarrow B \times B$ is an ordered $G \times G$-set.  By Theorem \ref{thm:main result} this happens if and only if there is an embedding $P(B) \rightarrow \mathbb{R} \times B \times B$ making the above diagram commute.
\end{proof}

\bibliographystyle{plain}
\bibliography{ob}

\end{document}